\DeclareSymbolFont{cyrletters}{OT2}{wncyr}{m}{n}
\DeclareMathSymbol{\Sha}{\mathalpha}{cyrletters}{"58}
\theoremstyle{definition}
\newtheorem{definition}{Definition}[section]
\newtheorem{remark}[definition]{Remark}
\newtheorem{example}[definition]{Example}
\theoremstyle{plain}
\newtheorem{lemma}[definition]{Lemma}
\newtheorem{proposition}[definition]{Proposition}
\newtheorem{corollary}[definition]{Corollary}
\newtheorem{theorem}[definition]{Theorem}
\newenvironment{Proof}[1][Proof.]{\begin{trivlist}
\item[\hskip \labelsep {\bfseries #1}]}{\flushright
$\Box$\end{trivlist}}
\def\Com{\mathrm{Com}}
\newcommand{\sudda}[1]{}
\begin{document}

\title{Metabelian Lie and perm algebras}
%\title{Perm algebras under commutator and anti-commutator}

\author{F. A. Mashurov}

\address{Suleyman Demirel University, Kaskelen, Kazakhstan and  Institute of Mathematics and Mathematical Modeling, Almaty, Kazakhstan}

\email{f.mashurov@gmail.com}

\author{B. K. Sartayev}

\address{Sobolev Institute of Mathematics, Novosibirsk, Russia and Suleyman Demirel University, Kaskelen, Kazakhstan}

\email{baurjai@gmail.com}

\keywords{Perm algebras, Lie and Jordan elements, polynomial identities, Gr\"obner bases}

\subjclass[2010]{17B01, 17B35, 13P10}
% 17A30 Algebras satisfying other identities
% 17A50 Free algebras
%13P10 %Gr¨obner bases; other bases for ideals and modules

\maketitle

\begin{abstract} 

It is well known that any Lie algebra can be embedded into an associative algebra.
We prove that any metabelian Lie algebra can be embedded into an algebra in the subvariety of perm algebras, i.e., associative algebras with the identity
$abc-acb =0$. In addition, a technical method to construct the universal enveloping perm algebra for a metabelian Lie algebra is given.
\end{abstract}

\section{\label{nn}\ Introduction}

Determining a set of identities for an adjoint class of a variety through commutator or anti-commutator products is a popular question in algebra.  This question has been solved or largely answered for some varieties, but it is still open for others. It is well known that a variety of associative algebras is a classic illustration of this approach.
The Poincaré-Birkhoff-Witt's (PBW) theorem provides the entire collection of identities for commutator algebras of associative algebras. Namely,
every identity that holds
on every associative algebra with respect to
the commutator product is a corollary of the anti-commutativity and the Jacobi identity. However, anti-commutator algebras of associative algebras have an entirely different story \cite{McCrimmon}. 

A variety of metabelian Lie algebras is defined by the additional identity  \begin{equation}\label{metabelian id}
[[a,b],[c,d]]=0. \end{equation} This variety  has received a lot of attention and was considered in \cite{DDF13}, \cite{Dual 1}, \cite{Scheuneman}, \cite{CHF}, \cite{Dual 2}, \cite{Romankov}. A basis of free metabelian Lie algebras was given in \cite{Bahturin1973}.

In this paper, we show that any metabelian Lie algebra can be embedded into an algebra in the subvariety of associative algebras defined by the right-commutativity identity:
\begin{equation}\label{perm id}
abc-acb=0. \end{equation}

An associative algebra with the identity $(\ref{perm id})$ is called a \textit{perm} algebra, see \cite{Perm 1}, \cite{Perm 2}. In \cite{PK-BS} perm algebras with derivation were considered.

Let $A^{(-)}= (A,[\cdot,\cdot])$ be a subalgebra of $A$ under the commutator, i.e., the multiplication is defined by the Lie bracket $[a,b]=ab- ba$. An algebra $A^{(+)}= (A,\{,\})$ can be defined in a similar way, i.e., with the anticommutator product  $\{a,b\}=ab+ba$.

Let $\mathcal{P}erm$ denote the variety of perm algebras. Define $\mathcal{P}erm^{(-)}$ and $\mathcal{P}erm^{(+)}$ as the classes of algebras of the form $P^{(-)}$ and $P^{(+)}$, respectively, where $P\in \mathcal{P}erm$. An algebra $B$ is called \textit{$p^{-}$-special} (\textit{$p^{+}$-special}) concerning Lie (\textit{Jordan}) brackets if there exists a perm algebra $P$ such that $B$ is a subalgebra of $P^{(-)}$ (\textit{$P^{(+)})$}, otherwise it is called \textit{$p^{-}$-exceptional} (\textit{$p^{+}$-exceptional}). The algebra $P$ is called a perm envelope for $B$.
We show that the class of $p^{-}$-special algebras coincides with the variety of metabelian Lie algebras. In the case of anti-commutator product, it is shown that the class of $p^{+}$-special algebras is not a variety since a homomorphic image of a $p^{+}$-special algebra may be $p^{+}$-exceptional. We find all identities that define the variety generated by all $p^{+}$-special algebras: apart from commutativity, there are two identities of degree four.

Let $X=\{x_1,x_2,\ldots\}$ be a set and $P(X)$ be  the free perm algebra generated by $X$. A polynomial in $P(X)$ is called Lie (\textit{Jordan}) element of $P(X)$ if it can be expressed by elements of $X$ in terms of commutators (\textit{anticommutators}). For associative algebras, there is a well-known  Dynkin-Specht-Wever criterion for Lie elements \cite{Dynkin}, and for the Jordan elements the question remains open. Also, metabelian Lie algebras can be obtained from bicommutative algebras
by a commutator \cite{Ismailov N}. Criteria for Lie and Jordan elements in a free bicommutative algebra have been recently found in \cite{Ismailov N}.  P.M. Cohn gave a criterion for the Jordan elements  generated by three elements \cite{Cohn}. In the same paper, the author provided a criterion of homomorphic images to be special and showed that the homomorphic image of the free special Jordan algebra generated by three elements can be exceptional. These questions are considered for the different varieties of algebras, see   \cite{DIM19}, \cite{DIS2022}, \cite{PK-BS2021}.

In this study, we present conditions for Lie elements and describe all Jordan elements in a free perm algebra. The principal results of the paper are derived by using Cohn's criterion for speciality of algebras and the obtained criteria. If an algebra $B$ can be embedded into $ A $, then it is natural to find the base of the universal envelope algebra of $A.$ For this reason, we use the technique of Gr\"obner-Shirshov bases for perm algebras which is
a particular case of the technique from \cite{Kolesnikov}. We show a method to find the basis of the universal enveloping perm algebra for a given metabelian Lie algebra.

In the last section, we consider a variety of (non-associative) commutative algebras with two identities of degree 4 that hold on every algebra from $\mathcal{P}erm^{(+)}$. We construct a linear basis in the free algebra of this variety and prove it to be $p^{+}$-special.

We consider all algebras over a field $K$ of characteristic $0$.

\section{Main Results}

Let $X$ be a non-empty set. We first consider the relation between the metabelian Lie algebras and algebras from $\mathcal{P}erm^{(-)}$.
\begin{theorem}\label{Identities in MLie}
The free metabelian Lie algebra $ML(X)$ is $p^-$-special. \end{theorem}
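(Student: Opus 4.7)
The natural strategy is to take the free perm algebra $P(X)$ itself as the envelope: let $L \subseteq P(X)^{(-)}$ be the Lie subalgebra generated by $X$, and aim to prove that the canonical map $\varphi : ML(X) \twoheadrightarrow L$, $x_i \mapsto x_i$, is an isomorphism. First I would verify that $L$ is metabelian. Associativity together with $(\ref{perm id})$ implies that in a perm algebra the first factor of any product of length $\geq 3$ is fixed while the remaining factors may be permuted arbitrarily; applied to $a,b,c,d \in P(X)$ this yields $abcd = abdc$, $bacd = badc$, $cdab = cdba$ and $dcab = dcba$, and a direct expansion gives
\[
[[a,b],[c,d]] = (ab-ba)(cd-dc) - (cd-dc)(ab-ba) = 0
\]
in $P(X)^{(-)}$. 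Thus $L$ satisfies $(\ref{metabelian id})$, $\varphi$ is well defined, and it is surjective by construction.

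For injectivity I would use Bahturin's basis of $ML(X)$ from \cite{Bahturin1973}, consisting of $X$ together with the left-normed brackets
\[
u_{\bar\imath} = [x_{i_1},x_{i_2},x_{i_3},\ldots,x_{i_n}], \qquad n \geq 2,\ i_1 > i_2,\ i_2 \leq i_3 \leq \cdots \leq i_n,
\]
and compute their images in $P(X)$. A short induction on $n$, in which at each step the two terms having the newly attached letter at the front coincide by the perm identity and cancel, shows
\[
\varphi(u_{\bar\imath}) = x_{i_1} x_{i_2} x_{i_3} \cdots x_{i_n} - x_{i_2} x_{i_1} x_{i_3} \cdots x_{i_n}.
\]

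Finally I would read off linear independence of the $\varphi(u_{\bar\imath})$ using the standard basis of $P(X)$, whose monomials are $x_{j_1} x_{j_2} \cdots x_{j_n}$ with $j_2 \leq \cdots \leq j_n$. Because $i_2 \leq \cdots \leq i_n$, the first summand of $\varphi(u_{\bar\imath})$ is already in normal form, while the second summand reduces to $x_{i_2}$ followed by the sorted rearrangement of $(i_1, i_3, \ldots, i_n)$. Order normal-form monomials primarily by the index of the first letter (larger first index is larger), breaking ties by lex on the sorted tail; since $i_1 > i_2$, the leading monomial of $\varphi(u_{\bar\imath})$ is $x_{i_1} x_{i_2} \cdots x_{i_n}$, and $\bar\imath$ is recovered uniquely from this monomial, so distinct $\bar\imath$ yield distinct leading monomials. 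The subtlety I expect to require the most care is ruling out that the leading monomial of some $\varphi(u_{\bar\imath})$ coincides with the \emph{second} summand of another $\varphi(u_{\bar\jmath})$; but such a coincidence would force $j_2 = i_1$ and hence $j_1 > j_2 = i_1$, so the leading monomial of $\varphi(u_{\bar\jmath})$ would be strictly larger, contradicting maximality in the leading-term argument. Therefore in any vanishing linear combination all coefficients must be zero, $\varphi$ is an isomorphism, and the embedding $ML(X) \hookrightarrow P(X)^{(-)}$ witnesses that $ML(X)$ is $p^-$-special.
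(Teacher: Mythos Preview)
Your proposal is correct and follows essentially the same route as the paper: map $ML(X)$ into $P(X)^{(-)}$ via $x_i\mapsto x_i$, check the metabelian identity there, expand Bahturin's basis elements in $P(X)$ to $x_{i_1}x_{i_2}\cdots x_{i_n}-x_{i_2}x_{i_1}x_{i_3}\cdots x_{i_n}$ (the paper packages this step as the lemma $[[\cdots[[x_{i_1},x_{i_2}],x_{i_3}],\cdots],x_{i_n}]=[x_{i_1},x_{i_2}]x_{i_3}\cdots x_{i_n}$), and conclude injectivity from linear independence of these images. The only difference is in the last step: the paper first reduces to a \emph{multilinear} $f$, so the ``first summands'' $x_{i_2}x_1x_{i_3}\cdots x_{i_n}$ are already distinct normal-form monomials and independence is immediate, whereas you handle arbitrary degrees directly with a leading-monomial argument; both work, the paper's shortcut is a bit shorter.
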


The Dynkin map $D: P(X)\rightarrow P(X)$ is a linear map,
defined on base elements of $P(X)$ as follows
$$x_{i_1}x_{i_2}\cdots x_{i_n}\mapsto[[\cdots [x_{i_1},x_{i_2}],\cdots ],x_{i_n}],$$
where $i_2\leq\ldots\leq i_n$.

We define \textit{head} of $f\in P(X)$ in the following way.
For $x\in X$, set $head(x)=x$. If $u=x_{i_1}x_{i_2}\dots x_{i_n}$
is a basic monomial in $P(X)$, $i_2\le \dots \le i_n$, then 
\[
head(u) = \begin{cases} 0, & i_1\le i_2, \\
u, &\text{otherwise}.
\end{cases}
\]
% as a polynomial where the index of the first generator of each monomial with a 
% degree bigger than one in $f$ is not minimal, and for the generator $x_i\in X$ 
% we define $head(x_i)=x_i$.
For a polynomial $f = \sum_i \alpha_iu_i$, expand $head$ by linearity.
For example, if $f=x_1x_2x_3x_4+x_2x_1x_3x_4-x_3x_1x_2x_4+2x_4x_1x_2x_3+x_2$, then 
$$head(f)=x_2x_1x_3x_4-x_3x_1x_2x_4+2x_4x_1x_2x_3+x_2.$$

Let $L(X)$ be a free special metabelian Lie algebra generated by a set $X,$ i.e., the set of all Lie elements.

\begin{theorem}\label{Lie criterion} Let $f\in P(X)$. Then  $f$ is a Lie element if and only if $D(head(f))=f.$
\end{theorem}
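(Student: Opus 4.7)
The plan is to prove the two implications separately. The ``if'' direction is immediate: $D$ sends every basic monomial $x_{i_1}\cdots x_{i_n}$ (with $i_2\le\cdots\le i_n$) to a left-normed bracket, so its image lies in the Lie subalgebra $L(X)\subseteq P(X)^{(-)}$; any $f$ with $f=D(head(f))$ is therefore a Lie element by construction. For the ``only if'' direction, I would first invoke Theorem~\ref{Identities in MLie} together with the universal property of $P(X)$ to identify $L(X)$ with the free metabelian Lie algebra $ML(X)$: any embedding $\iota\colon ML(X)\hookrightarrow Q^{(-)}$ supplied by Theorem~\ref{Identities in MLie} factors as $ML(X)\to L(X)\to Q^{(-)}$ through the universal perm homomorphism $P(X)\to Q$, so the canonical surjection $ML(X)\twoheadrightarrow L(X)$ is also injective. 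Because $D$ and $head$ are linear, it then suffices to verify $D(head(L))=L$ on a linear basis of $ML(X)$, namely the generators $x_i$ (for which both sides equal $x_i$) and the left-normed brackets
\[
L_{i_1,\ldots,i_n}=[[\cdots[x_{i_1},x_{i_2}],\cdots],x_{i_n}],\qquad n\ge 2,\ i_1>i_2,\ i_2\le i_3\le\cdots\le i_n.
\]

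The core of the argument is a closed-form expansion in $P(X)$. By induction on $n$, using the perm identity $abc=acb$ repeatedly, one proves
\[
L_{i_1,\ldots,i_n}=x_{i_1}x_{i_2}\cdots x_{i_n}-x_{i_2}x_{j_1}\cdots x_{j_{n-1}},
\]
where $(j_1,\ldots,j_{n-1})$ is the non-decreasing rearrangement of $(i_1,i_3,\ldots,i_n)$. The inductive step rests on the key observation that in a perm algebra, any product $x_k w$ of length $\ge 3$ is invariant under permutations of the letters of $w$; this forces $x_{i_n}\cdot L_{n-1}$ to vanish, since the two monomials of $L_{n-1}$ share the same multiset of letters, so only $L_{n-1}\cdot x_{i_n}$ contributes. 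Both surviving summands are already in basic form: in the first, $i_1>i_2$ makes its head equal to itself, while in the second the leading letter $i_2$ satisfies $i_2\le j_1=\min(i_1,i_3,\ldots,i_n)$, so its head vanishes. Hence $head(L_{i_1,\ldots,i_n})=x_{i_1}x_{i_2}\cdots x_{i_n}$, and $D$ applied to this basic monomial returns exactly $L_{i_1,\ldots,i_n}$.

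The main obstacle, in my view, is purely combinatorial: carrying out the inductive expansion carefully enough to confirm that only two monomials survive modulo the perm identity, and verifying that the second is killed by $head$ under the basis inequalities $i_1>i_2\le i_3\le\cdots\le i_n$. Once the expansion is in hand, the recovery by $head$ and $D$ is immediate, and the identification $L(X)\cong ML(X)$ needs only the short universal-property argument above.
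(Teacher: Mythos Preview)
Your argument is correct and follows essentially the same route as the paper: both directions match, and for the ``only if'' part you reduce to the metabelian Lie basis, expand each left-normed bracket in $P(X)$ as a difference of two monomials via the identity $x[a,b]=0$ (the paper packages this as Lemma~\ref{opening brackets}), and then observe that $head$ retains only the first monomial, which $D$ sends back to the original bracket. The only cosmetic differences are that you re-derive Lemma~\ref{opening brackets} by an explicit induction and are more careful about rewriting the second monomial in basic form before applying $head$; the paper simply cites Theorem~\ref{Identities in MLie} for the identification $L(X)\cong ML(X)$ rather than using your universal-property factorization.
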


\begin{theorem}\label{hom image minus}
	Every homomorphic image of $L(X)$ is $p^-$-special.
\end{theorem}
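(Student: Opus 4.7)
The plan is to show every homomorphic image of $L(X)$ embeds into a perm algebra by extending the defining ideal from the Lie setting to the perm setting. Given $M = L(X)/I$, I would combine the embedding $L(X)\hookrightarrow P(X)$ furnished by Theorem \ref{Identities in MLie} with the two-sided perm ideal $\tilde I$ of $P(X)$ generated by $I$. The composition $L(X)\hookrightarrow P(X)\twoheadrightarrow P(X)/\tilde I$ is a Lie homomorphism that kills $I$ and so factors through $\varphi\colon M\to (P(X)/\tilde I)^{(-)}$; since $P(X)/\tilde I$ is a perm algebra, proving $\varphi$ injective is exactly what is required.

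The perm ideal $\tilde I$ is tractable because right-commutativity forces $p_1\, i\, p_2 = p_1 p_2\, i$, so that $\tilde I = I + P(X)\cdot I + I\cdot P(X)$. The crux is therefore the identity $\tilde I \cap L(X) = I$, with the nontrivial inclusion being $\subseteq$. I would take a Lie element $f \in \tilde I$, write it as $f = i_0 + \sum_j p_j i_j + \sum_k i_k q_k$ with $i_0,i_j,i_k\in I$ and $p_j,q_k\in P(X)$, and apply Theorem \ref{Lie criterion} in the form $D(head(f))=f$. Expanding everything in the monomial basis $x_{i_1}\cdots x_{i_n}$ with $i_2\le\cdots\le i_n$, the operator $head$ should annihilate most contributions from $P(X)\cdot I$ and $I\cdot P(X)$, while the terms that survive and re-assemble under $D$ should be rewritable as Lie brackets $[L(X),I]\subseteq I$, using that $I$ is a Lie ideal.

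The main obstacle is controlling multi-term cancellations in $P(X)\cdot I + I\cdot P(X)$: a single product $p\cdot i$ is usually not itself a Lie element, so one cannot argue term-by-term; the whole sum must be handled at once. I expect the cleanest route is to fix a monomial order on $P(X)$, compute a Gr\"obner--Shirshov basis of $\tilde I$ in the framework of \cite{Kolesnikov}, and read off a normal form for $P(X)/\tilde I$ whose Lie part (identified via Theorem \ref{Lie criterion}) is visibly a copy of $M$. This is presumably the ``technical method to construct the universal enveloping perm algebra'' that the abstract advertises, and once the normal form is in hand the injectivity of $\varphi$, and hence $p^-$-speciality of $M$, follows by dimension counting on each multidegree.
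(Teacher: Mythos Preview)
Your setup is exactly the paper's: extend $I$ to the perm ideal $\tilde I$ of $P(X)$ and prove $\tilde I\cap L(X)=I$. What you miss is that this intersection is trivial to compute, because in fact $\tilde I=I$. Two elementary observations do all the work. First, every element of $I$ of degree $\ge 2$ is a linear combination of commutators, and the perm identity gives $p\,[a,b]=pab-pba=0$ for every $p\in P(X)$; hence $P(X)\cdot I=0$ and your decomposition collapses to $\tilde I=I+I\cdot P(X)$. Second, Lemma~\ref{opening brackets} says $g\,x_{j_1}\cdots x_{j_m}=[\cdots[g,x_{j_1}],\ldots,x_{j_m}]$ for $g\in L(X)$, so $I\cdot P(X)\subseteq I$ because $I$ is a Lie ideal. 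Thus $\tilde I=I$, and in particular $\tilde I\cap L(X)=I$.

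So the ``main obstacle'' you anticipate never materialises: each product $p\cdot i$ is zero, and each product $i\cdot p$ is already a Lie element lying in $I$. There is no need for the \emph{head}/Dynkin criterion or for Gr\"obner--Shirshov bases here; the paper's proof is literally the two lines above. The Gr\"obner--Shirshov construction appearing later in the paper serves a different purpose---producing an explicit basis of $U_{perm}(L)$ for a given metabelian Lie algebra $L$---and is not the mechanism behind this theorem.
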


Consequently, from Theorem $\ref{Identities in MLie}$ and Theorem $\ref{hom image minus}$ we have:

\begin{corollary}
	Every metabelian Lie algebra has a perm enveloping algebra.
\end{corollary}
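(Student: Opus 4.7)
The plan is to derive the corollary directly by combining the two preceding theorems with the standard quotient-presentation trick for algebras in a variety. Since metabelian Lie algebras form a variety, every metabelian Lie algebra $M$ can be written as a quotient $M\cong ML(X)/I$ of a free metabelian Lie algebra on some generating set $X$, by an ideal $I$ of relations.

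First I would invoke Theorem~\ref{Identities in MLie}: it asserts that $ML(X)$ is $p^-$-special, which, together with the evident surjection $ML(X)\twoheadrightarrow L(X)$ coming from the universal property of $ML(X)$ and the fact that $P(X)^{(-)}$ is metabelian, yields an identification $ML(X)\cong L(X)$ of Lie algebras (here $L(X)$ is the Lie subalgebra of $P(X)^{(-)}$ generated by $X$). Under this identification the ideal $I$ transports to an ideal $J\subseteq L(X)$, and $M\cong L(X)/J$ as Lie algebras.

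Next I would apply Theorem~\ref{hom image minus} to the Lie homomorphism $L(X)\twoheadrightarrow L(X)/J\cong M$. The theorem says that every homomorphic image of $L(X)$ is $p^-$-special, so there exists a perm algebra $P$ such that $M$ embeds as a Lie subalgebra of $P^{(-)}$. By definition this is exactly saying that $P$ is a perm enveloping algebra for $M$, which is the claim.

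There is no real obstacle here; the corollary is a formal consequence of the two theorems. The only mild subtlety worth flagging in the write-up is to justify the step $M\cong ML(X)/I \Rightarrow M\cong L(X)/J$: this uses the isomorphism $ML(X)\cong L(X)$ (i.e.\ that the canonical map from the free metabelian Lie algebra onto the Lie subalgebra of $P(X)^{(-)}$ generated by $X$ is injective), which is precisely the content of Theorem~\ref{Identities in MLie}. Once that identification is in place, Theorem~\ref{hom image minus} finishes the argument in one line.
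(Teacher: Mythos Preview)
Your argument is correct and is exactly the derivation the paper has in mind: the corollary is stated without proof, merely as a consequence of Theorem~\ref{Identities in MLie} and Theorem~\ref{hom image minus}, and your quotient-presentation $M\cong ML(X)/I\cong L(X)/J$ together with the identification $ML(X)\cong L(X)$ is precisely how one combines those two results. There is nothing to add.
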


Next, we consider algebras from $\mathcal{P}erm^{(+)}.$

\begin{theorem}\label{Identities in perm plus} Every perm algebra under anticommutator satisfies the following identities:
\begin{equation}\label{eq21}
\{a, b\} = \{b, a\},
\end{equation} 
\begin{equation}\label{eq22}\{\{a,b\},\{c,d\}\}=\{\{a,d\},\{b,c\}\},\end{equation}
\begin{equation}\label{eq23} 2\langle\{a,b\},c,d\rangle=\langle\{a,b\},d,c\rangle+\langle\{a,c\},b,d\rangle+\langle\{b,c\},a,d\rangle,
\end{equation}
for $a,b,c,d\in P(X)$ and where $\langle a,b,c\rangle=\{\{a,b\},c\}-\{a,\{b,c\}\}.$ \end{theorem}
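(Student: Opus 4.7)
The plan is to verify each identity by direct expansion in the ambient perm algebra $P$, exploiting the following normal form as the single main simplification. Iterating the perm identity $xyz = xzy$ shows that any associative product $x_1 x_2 \cdots x_n \in P$ is invariant under arbitrary permutations of the tail $x_2, \ldots, x_n$, so it depends only on its head letter $x_1$ and the multiset of the remaining letters. Identity $(\ref{eq21})$ is immediate from the definition of the anticommutator, so the whole content lies in identities $(\ref{eq22})$ and $(\ref{eq23})$.

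First I would derive a closed formula for the Jordan associator. Expanding $\{\{a,b\},c\}$ and $\{a,\{b,c\}\}$ into four associative monomials each and collapsing them to normal form gives $\{\{a,b\},c\} = abc + bac + 2cab$ and $\{a,\{b,c\}\} = 2abc + bac + cab$, whence
\[ \langle a,b,c \rangle = cab - abc \]
in any perm algebra under the anticommutator. This single formula is the workhorse for the remaining verifications.

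For identity $(\ref{eq22})$, I would expand each side into eight associative monomials and group by head letter. Both sides collapse to $2(abcd + bacd + cabd + dabc)$, the sum over all four choices of head letter of that letter times the remaining three (in normal form); the equality is then manifest because this expression is symmetric in $a,b,c,d$. For identity $(\ref{eq23})$, the formula $\langle X, y, z \rangle = zXy - Xyz$ lets me rewrite each of the four associators as a sum of four monomials in $P$; the two monomials with head $z$ collapse into a single doubled normal-form term, yielding for instance $\langle\{a,b\},c,d\rangle = 2dabc - abcd - bacd$, and analogously for the other three. Summing the three associators on the right and collecting by head letter produces $4\,dabc - 2\,abcd - 2\,bacd$, which matches twice the left-hand side.

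The only obstacle is bookkeeping: no deeper structural insight is needed, just careful expansion and systematic reduction to the normal form. Equivalently, the whole verification can be carried out in the multilinear degree-four component of the free perm algebra on four generators, where each identity becomes a finite linear check in the four-dimensional space indexed by the choice of head letter.
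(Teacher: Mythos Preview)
Your proposal is correct and follows essentially the same route as the paper: the paper's proof consists precisely of recording the two expansions $\{\{a,b\},\{c,d\}\}=2abcd+2bacd+2cabd+2dabc$ and $\langle\{a,b\},c,d\rangle=-abcd-bacd+2dabc$ in the perm normal form and declaring the rest a routine check. Your intermediate formula $\langle a,b,c\rangle=cab-abc$ is a nice extra step but not a different idea.
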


\begin{theorem}\label{Jordan elements}
	Every element with degree more than 2 in a free  perm algebra is a Jordan element.
\end{theorem}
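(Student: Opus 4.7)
The plan is to prove the statement by induction on the degree $n \ge 3$, showing that every basic monomial of $P(X)$ of degree $n$ lies in the Jordan subalgebra $J \subseteq P(X)^{(+)}$ generated by $X$. Combining associativity with the perm identity $(\ref{perm id})$, any two letters other than the first may be transposed inside a product; consequently, for a multiset $\mu$ of size $n$ supported on $\{y_1,\dots,y_s\}\subseteq X$ with multiplicities $m_1,\dots,m_s$, the basic monomials of support $\mu$ are the $s$ products $u_k = y_k\cdot(\text{sorted rest})$, one for each $k$ with $m_k \ge 1$.

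For the base case $n=3$, I would directly expand $\{y_i,\{y_j,y_k\}\}$, and, in the presence of repeated indices, also $\{y_j,\{y_i,y_i\}\}$ and $\{y_1,y_1^2\}$, using $(\ref{perm id})$ to reduce to basic form. For three distinct letters this gives
\[
\{y_i,\{y_j,y_k\}\} = 2u_i + u_j + u_k,
\]
whose $3 \times 3$ coefficient matrix has determinant $4$; the analogous systems for the shapes $\{y_1^2, y_2\}$ and $\{y_1^3\}$ are likewise nonsingular. Thus every basic degree-$3$ monomial lies in $J$. This base case is also where the genuine difficulty sits: degree-$2$ monomials $x_i x_j$ with $i\ne j$ are \emph{not} Jordan, so the induction cannot start at $n=2$, and a direct computation at $n=3$ is unavoidable.

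For the inductive step from $n\ge 3$ to $n+1$, fix a multiset $\mu$ of size $n+1$. For each $i$ and each $y_j \in \mu\setminus\{y_i\}$, let $v_j^{(i)}$ be the basic degree-$n$ monomial with first letter $y_j$ and support $\mu\setminus\{y_i\}$; by the inductive hypothesis $v_j^{(i)} \in J$. A short calculation using only $(\ref{perm id})$ yields
\[
\{y_i, v_j^{(i)}\} = u_i + u_j \quad (i\ne j), \qquad \{y_i, v_i^{(i)}\} = 2u_i \quad (\text{when } m_i\ge 2).
\]
If some $m_i\ge 2$ then $u_i \in J$, and $u_j = \{y_i,v_j^{(i)}\} - u_i \in J$ for every $j$. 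Otherwise every $m_i=1$, so $s = n+1 \ge 4$, and $2u_1 = (u_1+u_2)+(u_1+u_3)-(u_2+u_3)$ (with analogous identities for the other $u_k$) recovers each basic monomial. The inductive step is thus essentially bookkeeping once the perm identity is used to re-sort $v_j^{(i)} y_i$ into basic form.
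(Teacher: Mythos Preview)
Your argument is correct and follows the same overall strategy as the paper: induction on the degree $n\ge 3$, with the base case $n=3$ done by a direct computation in $P(X)^{(+)}$. For $n=3$ the paper simply writes down the single formula
\[
x_1x_2x_3=-\tfrac14\{\{x_1,x_2\},x_3\}+\tfrac34\{\{x_2,x_3\},x_1\}-\tfrac14\{\{x_1,x_3\},x_2\},
\]
while you obtain the same conclusion by inverting the $3\times 3$ system coming from $\{y_i,\{y_j,y_k\}\}=2u_i+u_j+u_k$; these are equivalent. For the inductive step the paper compresses everything into one line, asserting that $x_1\cdots x_{n-2}\{x_{n-1},x_n\}$ is a Jordan element and equals $2x_1\cdots x_n$ by the perm identity. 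Your version is more explicit: you anticommute a single generator $y_i$ with a degree-$n$ Jordan monomial $v_j^{(i)}$ supplied by the inductive hypothesis, obtain $\{y_i,v_j^{(i)}\}=u_i+u_j$ (and $\{y_i,v_i^{(i)}\}=2u_i$ when $m_i\ge 2$), and then recover each basic monomial $u_k$ by elementary linear combinations, treating the repeated-letter and multilinear cases separately. This spells out the bookkeeping that the paper's one-line step leaves to the reader, but the underlying idea --- combine the perm identity with anticommutators against elements already known to be Jordan --- is the same.
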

Let $SJ(X)$  be a free special algebra on $X$ under the anti-commutator product, i.e., subalgebra of $P(X)^{(+)}=(P(X), \{ , \})$ generated by $X.$ The next theorem is an analogue of Cohn’s theorem on exceptional homomorphic image of the special Jordan algebra in three generators \cite{Cohn}. In our case, we have an  exceptional homomorphic image with two generators.

\begin{theorem}\label{hom image plus}
	The algebra  $SJ(\{x,y\})$ has a $p^+$-exceptional homomorphic image.
\end{theorem}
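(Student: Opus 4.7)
My plan is to adapt Cohn's construction \cite{Cohn} to the perm setting: exhibit a Jordan ideal $J$ of $SJ(\{x,y\})$, generated by a single element, whose quotient admits no injective homomorphism into any $Q^{(+)}$ with $Q\in\mathcal{P}erm$. The generator will be $h:=\{x,x\}=2x^2$, and the exceptionality will be witnessed by the class of $w:=x^2y$, which by \eqref{perm id} coincides with $xyx$ in $P(\{x,y\})$.

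First I would pin down the degree-three piece of $J$. Since all defining relations are homogeneous, $J$ is a graded subspace, and the only way to produce a degree-$3$ element in the Jordan ideal generated by the degree-$2$ element $x^2$ is to apply one Jordan multiplication by a degree-$1$ element. This yields
\[
J\cap SJ(\{x,y\})_3=\operatorname{span}\{\{x,x^2\},\;\{y,x^2\}\}=\operatorname{span}\{2x^3,\;x^2y+yx^2\}.
\]
Next, by Theorem~\ref{Jordan elements}, $SJ(\{x,y\})_3$ equals $P(\{x,y\})_3$, which has as basis the six perm monomials $xxx,xxy,xyy,yxx,yxy,yyy$ (leader first, tail in canonical order, with redundancies eliminated by \eqref{perm id}, e.g.\ $xyx=xxy$). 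A direct inspection shows $w=xxy\notin\operatorname{span}\{xxx,\;xxy+yxx\}$, so $w\notin J$ and the class of $w$ in $SJ(\{x,y\})/J$ is nonzero. It will also be useful to record the Jordan expression $w=\tfrac{1}{2}\bigl(\{x,\{x,y\}\}-\{x^2,y\}\bigr)$, which is immediate from expanding the right-hand side and applying \eqref{perm id}.

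Finally I would derive a contradiction from the assumption that $SJ(\{x,y\})/J$ embeds into $Q^{(+)}$ for some $Q\in\mathcal{P}erm$ via a homomorphism $\varphi$. Writing $\bar{x}=\varphi(x)$, $\bar{y}=\varphi(y)$, the vanishing of $x^2$ modulo $J$ forces $\bar{x}^2=0$ in $Q$; since $Q$ is associative, $\bar{x}^2\bar{y}=0$, and \eqref{perm id} gives $\bar{x}\bar{y}\bar{x}=\bar{x}^2\bar{y}=0$ as well. Plugging these vanishings into the Jordan expression for $w$ shows $\varphi(w)=0$, contradicting the nonvanishing of $w$ in the quotient together with the injectivity of $\varphi$.

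The main obstacle I expect is the explicit description of $J\cap SJ(\{x,y\})_3$: one must argue carefully, using both the grading and the closure operations defining a Jordan ideal, that no iterated Jordan product can contribute to degree three beyond the two elements listed. Once this step is secured, the identification of the witness $w$ and the final perm-algebra calculation are short and routine.
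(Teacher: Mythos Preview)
Your argument is correct and essentially follows the same Cohn-style strategy as the paper, but with a different choice of ideal and witness. The paper takes the ideal $I$ of $SJ(\{x,y\})$ generated by the three elements $\{x,y\}$, $x^3$, $y^2$, observes that the quotient is $4$-dimensional with basis $x,y,x^2,b=\{x^2,y\}$, and then invokes Cohn's criterion: in the perm ideal generated by $I$ one has $(xy+yx)x=x^2y+yx^2=b$, so $b$ lies in the perm closure but not in $I$, whence the quotient is exceptional. You instead mod out by the single generator $x^2$ and use $w=x^2y$ as the witness, arguing directly that any Jordan embedding into a perm algebra forces $\bar x^2=0$ and hence kills $\tfrac12\{\bar x,\{\bar x,\bar y\}\}=\tfrac12(3\bar x^2\bar y+\bar y\bar x^2)=0$, while $w\notin J$ by your degree-$3$ analysis. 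Your route is a bit more economical (one generator, no explicit appeal to Cohn's criterion), while the paper's route produces a concrete finite-dimensional exceptional algebra; the underlying mechanism---an element that is Jordan, lies in the perm closure of the ideal, but not in the Jordan ideal itself---is the same in both.
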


\begin{corollary}
	The class of all $SJ(X)$ algebras is not variety.
\end{corollary}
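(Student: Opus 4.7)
The plan is to exhibit a Jordan ideal $I \subset SJ(\{x,y\})$ for which the quotient $B := SJ(\{x,y\})/I$ is not $p^+$-special. I take $I$ to be the Jordan ideal generated by $\{x,x\}$ (equivalently, by $x^2$ in characteristic $0$). The strategy is that, because of associativity of any ambient perm algebra, a purported perm envelope of $B$ is forced to kill more than $I$ itself does.

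Suppose for contradiction that $\iota \colon B \hookrightarrow Q^{(+)}$ is an embedding into some perm algebra $Q$, and write $x_0, y_0 \in Q$ for the images of the generators. The universal property of the free perm algebra produces a unique perm-algebra homomorphism $\psi \colon P(\{x,y\}) \to Q$ with $\psi(x) = x_0$, $\psi(y) = y_0$; a routine induction using $\{a,b\} = ab + ba$ shows that the restriction of $\psi$ to the subalgebra $SJ(\{x,y\}) \subset P(\{x,y\})$ coincides with the composite $SJ(\{x,y\}) \twoheadrightarrow B \xrightarrow{\iota} Q^{(+)}$. Since $\{x,x\} \in I$, we obtain $x_0^2 = 0$ in $Q$, whence by associativity $\psi(x^2 y) = x_0^2\, y_0 = 0$. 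On the other hand, a short computation using the perm relation $xyx = x^2 y$ in $P(\{x,y\})$ gives $x^2 y = \tfrac{1}{2}\bigl(\{\{x,y\},x\} - \{x^2, y\}\bigr)$, so $x^2 y \in SJ(\{x,y\})$. Injectivity of $\iota$ then forces $x^2 y \in I$.

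The remaining --- and principal --- technical point, which I expect to be the main obstacle, is verifying that $x^2 y \notin I$. Since $I$ is generated as a Jordan ideal by the degree-$2$ element $x^2$, every degree-$3$ element of $I$ must come from $\{u, x^2\}$ for some $u \in SJ(\{x,y\})$ of degree $1$, yielding exactly the span of $\{x, x^2\} = 2x^3$ and $\{y, x^2\} = x^2 y + y x^2$. With respect to the basis $\{x^3,\, x^2 y,\, y x^2\}$ of the degree-$3$ part of $P(\{x,y\})$, this subspace is two-dimensional and visibly does not contain $x^2 y$. This contradicts the conclusion $x^2 y \in I$ of the previous paragraph, and the proof is complete.
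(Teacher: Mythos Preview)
Your argument is correct. It rests on the same mechanism as the paper's---Cohn's speciality criterion, to the effect that $SJ(X)/I$ is $p^+$-special only if $I$ coincides with the intersection of $SJ(X)$ with the perm ideal generated by $I$ in $P(X)$---but you instantiate it with a different and simpler ideal. The paper takes $I$ to be the Jordan ideal of $SJ(\{x,y\})$ generated by $\{x,y\}$, $x^3$, and $y^2$, obtaining a $4$-dimensional exceptional quotient, and observes that $b=\{x^2,y\}$ lies in the perm closure of $I$ (since $\{x,y\}$ does) while $b\notin I$. You instead take $I=(x^2)$ and exhibit the single perm monomial $x^2y$ (a Jordan element by your explicit formula, or by the paper's result that every perm element of degree ${>}2$ is Jordan) that any perm envelope must annihilate but that does not lie in $I$. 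Your example is more economical in its generators and in the degree at which the obstruction appears; the paper's version has the incidental feature that the resulting exceptional algebra is finite-dimensional. One small wording issue: the set $\{x^3,\,x^2y,\,yx^2\}$ is a basis of the multidegree $(3,0)\oplus(2,1)$ component of $P(\{x,y\})$, not of the full degree-$3$ part (which is $6$-dimensional); this is harmless, since linear independence of those three monomials is all you actually use.
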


Let $J(X)$  be a free algebra on $X$ with the identities $(\ref{eq21}),(\ref{eq22})$ and $(\ref{eq23}).$

\begin{theorem}\label{free J(X) special}
	The free algebra $J(X)$  is $p^+$-special.
\end{theorem}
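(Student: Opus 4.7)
The plan is to realize $J(X)$ as the subalgebra $SJ(X)\subseteq P(X)^{(+)}$, where $P(X)$ is the free perm algebra on $X$. By Theorem \ref{Identities in perm plus}, the defining relations (\ref{eq21})--(\ref{eq23}) of $J(X)$ are satisfied by the anticommutator product on $P(X)$, so there is a canonical surjection $\phi\colon J(X)\to SJ(X)$ sending each $x\in X$ to itself. All the content is in proving that $\phi$ is injective.

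The target is quite explicit. Theorem \ref{Jordan elements} yields $SJ(X)_n = P(X)_n$ for every $n\geq 3$, while $SJ(X)_1$ and $SJ(X)_2$ are spanned by $x_i$ and $\{x_i,x_j\}$ respectively. The free perm algebra carries the well-known monomial basis $\{x_{i_1}x_{i_2}\cdots x_{i_n} : i_2\leq \cdots\leq i_n\}$, so I know in advance the dimension I must match in each degree.

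The next step is to produce a spanning set of $J(X)_n$ of exactly that size. Using (\ref{eq21}) to symmetrize and combining (\ref{eq22}) with (\ref{eq23}) to reorganize brackets, I would reduce every expression to a right-normed form
\[
\{x_{i_1},\{x_{i_2},\{\ldots,\{x_{i_{n-1}},x_{i_n}\}\ldots\}\}\}, \qquad i_2\leq \cdots\leq i_n,
\]
which matches the perm basis index-for-index. Once such a spanning set of the right cardinality is in place, surjectivity of $\phi$ onto $SJ(X)_n=P(X)_n$ forces $\phi$ to be bijective in each degree; alternatively, one can verify injectivity directly by expanding each right-normed monomial in the perm basis of $P(X)$ and checking that its leading term is the corresponding standard monomial, so the images are linearly independent.

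The main obstacle is the bracket reduction. Identity (\ref{eq23}) is not a rewriting rule in the usual sense — it relates four associators rather than expressing one in terms of strictly simpler ones — and the interaction between (\ref{eq22}) and (\ref{eq23}) inside nested anticommutators requires careful bookkeeping to avoid nontrivial confluence issues. I would handle this by invoking the Gr\"obner--Shirshov framework for operadic algebras referenced earlier in the paper: compute a complete set of relations modulo (\ref{eq21})--(\ref{eq23}) on the free commutative non-associative algebra, and read off the right-normed basis from the set of irreducible monomials. With the basis in hand, the cardinality argument above closes the proof.
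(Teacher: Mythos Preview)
Your architecture matches the paper's exactly: exhibit the canonical surjection $\phi\colon J(X)\to SJ(X)$, use Theorem~\ref{Jordan elements} to identify $SJ(X)_n$ with $P(X)_n$ for $n\ge 3$, and then prove injectivity by a dimension count against a spanning set of $J(X)_n$ of the right size.

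The gap is precisely where you locate it yourself. You propose right-normed monomials with $i_2\le\cdots\le i_n$ as the spanning set, but you do not prove that they span; deferring to ``the Gr\"obner--Shirshov framework referenced earlier in the paper'' is not a proof. That framework (Section~4, via \cite{Kolesnikov}) is tailored to di-commutative, i.e.\ perm, algebras---it does not apply to the commutative non-associative variety cut out by (\ref{eq22})--(\ref{eq23}). To carry out your plan you would have to set up a monomial order on the free commutative magma, close $\{(\ref{eq22}),(\ref{eq23})\}$ under compositions, and verify confluence; identity (\ref{eq23}) relates four associators rather than rewriting one nested anticommutator into simpler ones, so there is no obvious reason this terminates with your right-normed words as the irreducibles. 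That computation \emph{is} the theorem, and it is missing.

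The paper bypasses this entirely with a hands-on device. It introduces the auxiliary operation
\[
f(a;b,c)=-\tfrac14\bigl((ab)c-3(bc)a+(ac)b\bigr),
\]
and proves directly from (\ref{id22})--(\ref{id23}) the identities $f(a;b,cd)=f(a;bc,d)$ and $f(a;b,c)\,d=\tfrac12 f(a;b,cd)+\tfrac12 f(d;a,bc)$. These two relations make the induction on degree a one-liner: any left-normed word times a generator is again a combination of elements $f(x_{i_1};x_{i_2},x_{i_3}\cdots x_{i_n})$ with $i_2\le\cdots\le i_n$. This set $B_n(X)$ has exactly $\dim P(X)_n$ elements, and the dimension count finishes. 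The point is that $f$ is engineered so that its rewriting rules already look like the perm relation $abc=acb$; your raw right-normed anticommutators carry no such built-in structure.
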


\section{Proof of theorems}
In this section we give proofs of main results.

\begin{lemma}\label{opening brackets}
 The following identity holds in every perm algebra:
\begin{equation}\label{opening brackets1} [[\cdots[[x_{i_1},x_{i_2}],x_{i_3}],\cdots], x_{i_n}]=[x_{i_1},x_{i_2}]x_{i_3}\cdots x_{i_n}, \, \, \, n\geq 2.\end{equation}\end{lemma}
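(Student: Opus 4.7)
The plan is to proceed by induction on $n$, using one fundamental observation about perm algebras: the identity $abc=acb$ (in the form $\ref{perm id}$) immediately gives $x\cdot[a,b] = xab-xba = 0$ for any elements $x,a,b$ in a perm algebra. In other words, left multiplication kills any commutator. This single fact will drive the whole argument, since the right-hand side of $(\ref{opening brackets1})$ begins with a commutator followed by ordinary products, so inner-bracketing should collapse trivially.

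The base case $n=2$ is the tautology $[x_{i_1},x_{i_2}]=[x_{i_1},x_{i_2}]$. For the inductive step, denote by $u$ the iterated left-normed commutator of length $n-1$, so that the left-hand side of $(\ref{opening brackets1})$ for length $n$ is $[u,x_{i_n}]=ux_{i_n}-x_{i_n}u$. By the inductive hypothesis, $u=[x_{i_1},x_{i_2}]x_{i_3}\cdots x_{i_{n-1}}$, so the first term $ux_{i_n}$ is already exactly what we want. It remains to verify that $x_{i_n}u=0$.

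For this, apply associativity to regroup:
\[
x_{i_n}u = x_{i_n}\bigl([x_{i_1},x_{i_2}]x_{i_3}\cdots x_{i_{n-1}}\bigr) = \bigl(x_{i_n}[x_{i_1},x_{i_2}]\bigr)x_{i_3}\cdots x_{i_{n-1}},
\]
and the inner factor $x_{i_n}[x_{i_1},x_{i_2}] = x_{i_n}x_{i_1}x_{i_2}-x_{i_n}x_{i_2}x_{i_1}$ vanishes by the perm identity $(\ref{perm id})$. This closes the induction.

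There is no real obstacle here: everything reduces to the observation that a commutator annihilated from the left by any element is zero in a perm algebra, and associativity lets us expose such a commutator inside $x_{i_n}u$. The only mild care is to keep the induction carried on the length $n$ while keeping the first two indices $i_1,i_2$ as the distinguished commutator pair.
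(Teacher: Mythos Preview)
Your proof is correct and follows the same idea as the paper's: both rest entirely on the observation that $x[a,b]=0$ in any perm algebra. The paper simply states that the lemma ``follows immediately'' from this identity, while you spell out the induction on $n$ explicitly; the underlying mechanism is identical.
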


\begin{proof} 
It follows immediately from the identity $x[a,b]=0.$
\end{proof}

An ideal $I$ of the free perm algebra $P(X)$ is called \textit{T-ideal} if $\xi(I)\subseteq I$ for all endomorphisms $\xi$ of $P(X).$ Denote by $T(u)$ the \textit{T-ideal} in $P(X)$ generated by $u\in P(X).$

\begin{proposition}
   Let $P$ be a perm algebra. Then the Lie algebra $(P, [\cdot,\cdot])$ satisfies the  metabelian identity.
\end{proposition}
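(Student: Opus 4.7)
The plan is to reduce the metabelian identity to a single observation that is essentially already the content of Lemma~\ref{opening brackets}: in every perm algebra $P$, the product of any element with a commutator is zero. Indeed, for arbitrary $x, y, z \in P$, applying the perm identity to the associative product $x \cdot y \cdot z$ gives $x(yz) = xyz = xzy = x(zy)$, so $x[y,z] = x(yz) - x(zy) = 0$.

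Once this is in hand, the proof of the proposition is immediate. Expand the outer commutator as
\[
[[a,b],[c,d]] = [a,b] \cdot [c,d] - [c,d] \cdot [a,b].
\]
The first summand has the form $u \cdot [c,d]$ with $u = [a,b] \in P$, and the second has the form $u' \cdot [a,b]$ with $u' = [c,d] \in P$. Both vanish by the annihilation observation above, and therefore $[[a,b],[c,d]] = 0$ holds in $(P, [\cdot,\cdot])$ for every perm algebra $P$.

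There is no genuine obstacle here: the statement is a one-line consequence of the perm identity, and the only point to recognize is that the auxiliary identity $x[y,z] = 0$ — the same fact underlying the proof of Lemma~\ref{opening brackets} via $x[a,b] = 0$ — is exactly the right lever to pull.
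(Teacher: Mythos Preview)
Your proof is correct and follows essentially the same approach as the paper: both arguments rest on the observation that $x[y,z]=0$ in any perm algebra (the paper frames this as $[[a,b],[c,d]]\in T(a[b,c])$, while you expand the outer commutator directly). The only difference is cosmetic---you spell out the two-term expansion explicitly, whereas the paper invokes the $T$-ideal language.
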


\begin{proof} It is easy to see that 
	$[[a,b],[c,d]]\in T(a[b,c]).$
	Since the  identity $(\ref{perm id})$ holds in $P(X),$ hence  $(\ref{metabelian id})$ is an identity in $(P, [\cdot,\cdot]).$

\end{proof}

%\begin{lemma}\label{deg4} Every identity of degree no more than four satisfied by the commutator products in every perm algebra is a consequence of anti-commutative, the Jacobi and the metabelian identities.\end{lemma} 
 %\begin{proof}	It is easy to check Perm algebras under commutator satisfies the Jacoby identity. We show that there is no new identity in degree four except $(\ref{eq11})$,$(\ref{eq12})$ and $(\ref{eq13}).$ A base of a free metabelian Lie algebra is given in \cite{Bahturin}. 	Assume, there is exist  a new	identity in degree 4 then by  basis a free metabelian Lie algebra, its polynomial $f(x_1, x_2, x_3, x_4)$ has the form 
 	
%	$$f(x_1, x_2, x_3, x_4)=$$ 	$$\lambda_1[[[x_1, x_2], x_3], x_4]+\lambda_2[[[x_1, x_3], x_2], x_4]+\lambda_3[[[x_1, x_4], x_2], x_3],$$ 	for some $\lambda_1,\lambda_2,\lambda_3 \in K.$ If we open the commutators, then  		$$f(x_1, x_2, x_3, x_4)=$$ 		$$(\lambda_1+\lambda_2+\lambda_3) x_1x_2x_3x_4-\lambda_1x_2x_1x_3x_4-\lambda_2x_3x_1x_2x_4-\lambda_3x_4x_1x_2x_3.$$

%Since the elements $x_2x_1x_3x_4,x_3x_1x_2x_4,x_4x_1x_2x_3$ are linearly independent in $P(\{x_1, x_2, x_3, x_4\}),$  hence $f(x_1, x_2, x_3, x_4)=0$ if and only if $\lambda_1=\lambda_2=\lambda_3=0.$	\end{proof}
	
	 Let us recall a free base of metabelian Lie algebra on a set $X$ of degree $n$ \cite{Bahturin1973}:
    \begin{equation}\label{base of Meta.Lie}
        [[\cdots[[x_{i_2},x_{i_1}],x_{i_3}],\cdots], x_{i_n}],
    \end{equation}
	where $i_2>i_1\leq i_3\leq\ldots\leq i_n$.
	
	\subsection{Proof of Theorem \ref{Identities in MLie}}
    \begin{proof}
      
  Consider the homomorphism $\phi$ from $ML(X)$ to $P(X)^{(-)}$ such that $\phi(x) = x$ for $x\in  X.$ The kernel of $\phi$ consists of all identities that hold on all perm algebras with respect to the commutator product. Assume $0\neq f \in ker \, \phi.$ Without loss of generality we may assume $X=\{x_1,x_2,\ldots\}$ and $f=f(x_1,\ldots,x_n)$ is multi-linear in $x_1,\ldots,x_n.$ Then by \cite{Bahturin1973} $ML(X)$ is spanned by the elements in the form $(\ref{base of Meta.Lie}).$ Therefore, 
  $$f=\sum \lambda_{i_2,\ldots,i
  _n}[[\cdots[[x_{i_2},x_{1}],x_{i_3}],\cdots], x_{i_n}],$$
  where $1<i_3<\cdots<i_n$ and $\lambda_{i_2,\ldots,i
  _n}\in K.$
  
  By Lemma \ref{opening brackets} the image $\phi(f)$ is nonzero in $P(X),$ provided that at least one of the coefficients $\lambda_{i_2,\ldots,i
  _n}$ is non zero, since the monomials $\lambda_{i_2,\ldots,i
  _n} x_{i_2} x_{1} x_{i_3}\cdots  x_{i_n}$ are linearly independent in $P(X).$
  
   \end{proof}

		\subsection{Proof of Theorem \ref{Lie criterion}}
	 Let $P_n(X)$ and $L_n(X)$ be the $n$-th homogeneous parts of $P(X)$ and  $L(X),$ respectively.
	\begin{proof}
	Let $f\in P_n(X)$ and suppose that 
	$D(head(f))=f$. Since the image of Dynkin map $D$ is the subspace of $L(X),$ $f$ is Lie element.
	
		Conversely, suppose that $f\in L_n(X).$ Since, $L(X)$ is a free special metabelian Lie algebra $f$ has the following form:
		$$f=\sum \lambda_i[[\cdots[[x_{i_2},x_{i_1}],x_{i_3}],\cdots], x_{i_n}], $$
		where $\lambda_i\in K$ and $i_2>i_1\leq i_3\leq\ldots\leq i_n$.
		
		By Lemma \ref{opening brackets}, for  each monomial $\lambda_i[[\cdots[[x_{i_2},x_{i_1}],x_{i_3}],\cdots], x_{i_n}]$  we have
		$$head(\lambda_i[[\cdots[[x_{i_2},x_{i_1}],x_{i_3}],\cdots], x_{i_n}])=\lambda_i x_{i_2}x_{i_1}x_{i_3}\cdots  x_{i_n}.$$
		If we apply the linear mapping $D$  for the monomial of $P(X)$ on the right  hand of the equality, then 
		$$D(\lambda_i x_{i_2}x_{i_1}x_{i_3}\cdots  x_{i_n})=\lambda_i D(x_{i_2}x_{i_1}x_{i_3}\cdots  x_{i_n})=$$		$$\lambda_i[[\cdots[[x_{i_2},x_{i_1}],x_{i_3}],\cdots], x_{i_n}].$$ 
		We obtain $D(head(f))=f$ by applying the Dynkin map $D$ to other basis elements in $head(f).$ 
		\end{proof}

	%We use the result that was proved by Bakhturin in \cite{BahturinML}. He proved that any proper subvariety of the metabelian Lie algebra is nilpotent. By Lemma \ref{opening brackets} we see that $[\cdots[x_1,x_2],\cdots x_n]\neq 0$ for any $n.$ Therefore the algebra $P^{(-)}(X)$ is not nilpotent and any identity follows from $(\ref{eq11})$, $(\ref{eq12})$ and $(\ref{eq13}).$	 \end{Proof}	

		\subsection{Proof of Theorem 2.3}
		
		\begin{proof}
		Let $I$ be an ideal in $L(X)\subseteq  P(X)^{(-)}.$ Denote by $J$ the ideal of P(X) generated by $I.$ Since $P(X)L(X) = 0$ by $(\ref{perm id})$, we have $J = I + IP(X).$ By Lemma $\ref{opening brackets}$ $gx_1\cdots x_n = [\cdots [g, x_1],\cdots , x_n] \in L(X)$ for every $g \in L(X).$ Hence, $I = J$ and, in particular, $L(X) \cap J = I.$ The latter implies $L/I \subseteq (P(X)/J)^{(-)}$ as desired.
			\end{proof}
			
		\subsection{Proof of Theorem \ref{Identities in perm plus}}
	    \begin{proof}
	    
	   	The identities $(\ref{eq22})$ and $(\ref{eq23})$ can be easily  verified using the following expansions of the elements				\begin{align}
        \{\{a,b\},\{c,d\}\}=2abcd+2bacd+2cabd+2dabc, \\
        \langle\{a,b\},c,d\rangle=-abcd-bacd+2dabc.
        \tag*{\qedhere}
        \end{align}
		\end{proof}

		\subsection{Proof of Theorem \ref{Jordan elements}}
		\begin{proof}

		Let us write a congruence relation $a\equiv 0 $ for $a\in SJ(X)$.
		Then we have to show that 
		\begin{equation}\label{j1}
		x_1x_2\cdots x_n\equiv0,\end{equation}
		for $n>2$.
		We use induction on $n$. If $n=3$, then 
		$$x_1x_2x_3=-\frac{1}{4}\{\{x_1,x_2\},x_3\}+\frac{3}{4}\{\{x_2,x_3\},x_1\}-\frac{1}{4}\{\{x_1,x_3\},x_2\}.$$
		Assume that $(\ref{j1})$ holds for elements with degree less than $n.$ Then by $(\ref{perm id})$, we obtain
		$$0\equiv x_1x_2\cdots x_{n-2}\{x_{n-1},x_{n}\}=2x_1x_2\cdots x_n.$$ 
		Hence
		\[x_1x_2\cdots x_n\equiv0. \qedhere \]
			\end{proof}

		\subsection{Proof of Theorem \ref{hom image plus}}
		\begin{proof}
	
	Let $I$ be the ideal of $SJ(\{x, y\})$ generated by $\{x, y\},x^3,y^2.$ 
    Then it is easy to see that $B = SJ(\{x, y\})/I$ is a $4$-dimensional algebra with a linear basis $x, y, a=x^2 =\frac{1}{2}\{x, x\}, b = \{a, y\}.$ It is essential that $2b = \{\{x, x\}, y\}\notin I$ since all generators of $I$ are homogeneous in $x$ and $y,$ but the only appropriate element of $I$ is $\{x, \{x, y\}\}$ which is not proportional to $b.$ All other anti-commutators of the basic elements are zero. Denote by $J$ the ideal in $P(\{x, y\})$ generated by $I.$ If $B$ was a $p^+$-special algebra then $I = J \cap SJ(\{x, y\})$ by the Cohn’s criterion \cite{Cohn}, but the latter intersection contains $b$ by Theorem $\ref{Jordan elements}$.

		\end{proof}
		
\section{The basis of universal enveloping perm algebras for metabelian Lie algebras}

In this section, we construct the basis of universal enveloping perm algebras for metabelian Lie algebras and consider one example of metabelian Lie algebra. We use the technique of Gr\"obner-Shirshov bases for perm algebras that can be derived from \cite{Kolesnikov}. More explicitly, the method given in \cite{Kolesnikov} allows one to construct Gr\"obner-Shirshov bases for $di$-$Var$ algebras using the theory of Gr\"obner-Shirshov bases of $Var$ algebras.

In our case, the free perm algebra is a free di-commutative algebra, and the theory of Gr\"obner bases for commutative algebras developed in \cite{Buchberger}. So, we have 
$$Perm\langle X\rangle=di\text{-}Com\langle X\rangle\simeq V\subseteq Com\langle X\cup\dot{X}\rangle,$$
where $V$ is a subalgebra of $Com\langle X\cup\dot{X}\rangle$ generated by $\dot{X}$ relative to the operations $\vdash$ and $\dashv$ such that $a\vdash b=a\dot{b}$ and $a\dashv b=\dot{a}b$. 
Practically, $V$ consists of commutative polynomials in $X\cup \dot X$
that are linear in $\dot X$.
Using this scheme we can construct the basis of universal enveloping perm algebras for a metabelian Lie algebras.

Let $L$ be a metabelian Lie algebra and let $X$
be a linear basis of $L$ presented as $X=Y\cup Z$,
where $Y$ is a basis of $L'=[L,L]$ and $Z$ is a complement of $Y$ to a basis of $L$. 
Suppose both $Y$ and $Z$ are linearly ordered and $Y<Z$.
The universal enveloping Perm-algebra $U_{perm}(L)$ 
of $L$ is generated by $X$ relative to 
the following relations:
\[
x_i \dashv x_j - x_j\vdash x_i = [x_i,x_j],\quad x_i,x_j\in X.
\]
According to the general scheme, given in \cite{Kolesnikov}, we should transform these relations into commutative polynomials.
Namely, $U_{perm}(L)$ may be identified with a subalgebra of 
$A=\Com\<X\cup \dot X \>/(S)$, where 
$S$ consists of 
\[
\begin{gathered}
\dot{y}_iy_j-\dot{y}_jy_i=0,\;\;\; y_i>y_j, \\
\dot{z}_iy_j-\dot{y}_jz_i=\dot{[z_i,y_i]}, \\
\dot{z}_iz_j-\dot{z}_jz_i=\dot{[z_i,z_j]},\;\;\;z_i>z_j.   
\end{gathered}
\]
and the same relations with removed dots:
\[
[z_i,y_j]=0, \quad [z_i,z_j]=0,
\]
where $y_j\in Y$ and $z_i,z_j\in Z$.
The second group of relations says
$y=0$ for $y\in Y$ and, therefore, we may assume that $S$ 
consists of 
\[
\dot{y}_j z_i=\dot{[y_j,z_i]},
\quad 
\dot{z}_iz_j-\dot{z}_jz_i=\dot{[z_i,z_j]},\ i>j.
\]
Suppose $\dot Y<\dot Z<Z$.
To construct the linear basis of $U_{perm}(L)$ in algebra $A$ 
it is enough to consider those compositions of relations 
that contain only one dot. The first such composition 
corresponds to the ambiguity 
$\dot{y}z_iz_j$. On the one side
$$\dot{y}z_iz_j=\dot{[y,z_i]}z_j=\dot{[[y,z_i],z_j]},$$
and from the other side
$$\dot{y}z_iz_j=\dot{[y,z_j]}z_i=\dot{[[y,z_j],z_i]}.$$
Note that the commutator relations hold in $L$, and
by the Jacobi and metabelian identities, we obtain 
$$[[y,z_i],z_j]-[[y,z_j],z_i]=[y,[z_1,z_2]]=0.$$
The second composition comes from the ambiguity 
$\dot{z}_i z_jz_k$, where $i>j>k$.
On the one side
$$\dot{z}_iz_jz_k=\dot{z}_jz_iz_k+\dot{[z_i,z_j]}z_k=\dot{z}_kz_iz_j+\dot{[z_j,z_k]}z_i+\dot{[z_i,z_j]}z_k,$$
and from the other side
$$\dot{z}_iz_jz_k=\dot{[z_i,z_k]}z_j+\dot{z}_kz_iz_j.$$
By the Jacobi identity, we obtain
$$[[z_j,z_k],z_i]+[[z_i,z_j],z_k]-[[z_i,z_k],z_j]=0.$$
Since all compositions are trivial, these relations form a Gr\"obner--Shirshov base of $U_{perm}(L)$ algebra, and the linear basis of 
$U_{perm}(L)$ consists of monomials in $A$ which are linear in $\dot X$
and reduced 
modulo $\dot{y}z$ and $\dot{z}_iz_j$, where $i>j$. These monomials are
$$\dot{y},\;\;\; \dot{z}_{i_1}z_{i_2}\cdots z_{i_n},$$
where $i_1\leq i_2\leq\ldots\leq i_n$, $n\ge 1$.

\begin{example}
Let us consider the three dimensional Heisenberg algebra $\mathcal{H}$, i.e., a 3-nilpotent Lie algebra with the following multiplication table:
$$[e_1,e_2]=-[e_2,e_1]=e_3.$$
By obtained result, the basis monomials of $U(\mathcal{H})$ are
$$\dot{e}_3,\;\;\;\dot{e}_2 e_2\cdots e_2$$
and
$$\dot{e}_1 e_1\cdots e_1 e_2\cdots e_2.$$
In terms of perm algebra, basis monomials of $U(\mathcal{H})$ are $e_3$, $e_2^n$, $e_1^ke_2^l$ and $\mathcal{H}\hookrightarrow U(\mathcal{H})$.
\end{example}

\begin{remark}
Note that the pair of varieties ($\mathcal{P}erm$, $\mathcal{ML}ie$) is not a PBW-pair in the sense of \cite{Mikhalev-Shestakov2014}. 
Indeed, the Gel'fand--Kirillov dimension of $U(\mathcal{H})$ equals two, but the universal enveloping  perm algebra
of the 3-dimensional abelian Lie algebra is just the polynomial algebra in 3 variables, so its 
Gel'fand--Kirillov dimension is 3.
\end{remark}

\section{\label{nn1}\ Proof of Theorem \ref{free J(X) special}}

Let $X=\{x_1,x_2,\ldots\}$ and $J(X)$ be the free algebra generated by $X$ in the variety defined by the following identities:
\begin{equation}\label{id21}
a b = b a,
\end{equation} 
\begin{equation}\label{id22} (ab)(c d)=(ad)(bc),\end{equation}
\begin{equation}\label{id23} (ab)(c d)=-2((ab)c)d + (( a b)d)c +(( ac)b)d +(( bc)a)d=0
\end{equation}
for $a,b,c,d\in J(X).$

Note that the identities $(\ref{id21})-(\ref{id23})$ are equivalent to $(\ref{eq21})-(\ref{eq23}).$ Denote a left-normed element $((x_1x_2)\cdots )x_n\in J(X) $ as $x_1x_2\cdots x_n,$ where $x_i\in X, i\in\{1,\ldots,n\}.$

\begin{lemma} Every element in  $J(X)$ can be expressed as a linear combination of left-normed elements.

\end{lemma}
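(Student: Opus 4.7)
The plan is to prove the lemma by a double induction: the outer induction is on the total degree $n$ of the element, and the inner induction is on the degree of the smaller factor in a binary product. By bilinearity of multiplication it is enough to show that whenever $u$ and $v$ are left-normed monomials with $\deg u + \deg v = n$, the product $u\cdot v$ is itself a linear combination of left-normed monomials; the outer inductive hypothesis will have already straightened everything of total degree less than $n$.

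Fix such a product. By commutativity (\ref{id21}) we may assume $\deg u\ge \deg v =: m$. If $m=1$ then $u\cdot v$ is already left-normed. Otherwise $m\ge 2$ and also $\deg u\ge 2$, so write $u = u'\cdot x$ and $v = v'\cdot y$ with $u',v'$ left-normed and $x,y\in X$. Applying (\ref{id23}) with $a=u'$, $b=x$, $c=v'$, $d=y$ gives
\[
(u'x)(v'y) \;=\; -2\,((u'x)v')\,y \;+\; ((u'x)y)\,v' \;+\; ((u'v')x)\,y \;+\; ((xv')u')\,y.
\]
I would then analyze each of the four terms on the right. The first term has the form $(u\cdot v')\cdot y$, where the inner product $u\cdot v'$ has total degree $n-1$; the third has the form $((u'\cdot v')\cdot x)\cdot y$, where $u'\cdot v'$ has total degree $n-2$; the fourth, after using commutativity to replace $xv'$ by $v'x$, has the form $((v'x)\cdot u')\cdot y$, an inner product of total degree $n-1$. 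All three are handled by the outer induction, and right-multiplying the resulting left-normed expansion by the leftover single variables $x$ or $y$ preserves the left-normed form. The critical term is the second, $((u'x)y)\cdot v' = (uy)\cdot v'$: here the total degree is still $n$, but the smaller factor $v'$ has degree $m-1<m$, so the inner induction applies.

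The main obstacle is choosing an induction measure that strictly decreases across all four right-hand terms of (\ref{id23}); term two preserves the total degree, which is precisely why the inner induction on the degree of the smaller factor is needed, and why commutativity (to ensure $\deg v\le \deg u$) must be applied before invoking the identity. Once the measure $(n,m)$ with lexicographic order is fixed, the argument is formal: the identity (\ref{id23}) acts as a straightening rule that rewrites an associator $(ab)(cd)$ into strictly simpler pieces, and the lemma follows by iterating the reduction.
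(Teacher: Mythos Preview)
Your proof is correct and follows essentially the same route as the paper, which merely says that for $n>3$ the claim ``can be easily proved by induction via (\ref{id23}).'' Your double induction on $(n,m)$, with the inner parameter handling the degree-preserving term $((u'x)y)v'$, is exactly the argument the paper's one-line sketch leaves implicit.
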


\begin{proof} Let $w\in J(X)$ be a monomial of degree $n.$ For $n=1,2$ it is trivial. For $n=3,$ it follows from $(\ref{id21})$. For $n>3,$ the statement can be easily proved by induction via $(\ref{id23}).$

\end{proof}

For $a,b,c\in J(X),$ let us define $$f(a;b,c)=-\frac{1}{4}\big((ab)c-3(bc)a+(ac)b\big).$$

Note that from the above definition we have 
\begin{equation}\label{newident40}
    f(a;b,c)=f(a;c,b).
\end{equation}

\begin{proposition} The following identities hold in $J(X)$

\begin{equation}\label{newident41}
    (ab)c=f(a;b,c)+f(b;a,c)+2f(c;a,b)
\end{equation}

\begin{equation}\label{newident43}
    f(a;b,cd)=f(a;bc,d),
\end{equation}

\begin{equation}\label{newident44}
    f(a;b,c)d=\frac{1}{2}f(a;b,cd)+\frac{1}{2}f(d;a,bc),
\end{equation}

\begin{equation}\label{newident431}
     f(a;b,(cd)e)=f(a;b,c(de)),
\end{equation}

\end{proposition}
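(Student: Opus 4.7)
The plan is to verify the four identities by expanding both sides using the definition of $f$, reducing every product to left-normed form via the Lemma, and applying the defining identities (\ref{id21})--(\ref{id23}). Throughout I will use the shorthand $\alpha_{xyzw} := ((xy)z)w$ for a left-normed degree-$4$ monomial.

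Identity (\ref{newident41}) is a direct computation. Expanding $f(a;b,c)+f(b;a,c)+2f(c;a,b)$ term by term and using only commutativity (\ref{id21}), the coefficients of the three ``cyclic'' products $(ab)c$, $(ac)b$ and $(bc)a$ collapse so that only $(ab)c$ survives with coefficient $1$; no deeper relation is needed.

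For (\ref{newident43}) I would first expand the difference $f(a;b,cd)-f(a;bc,d)$ and observe that the quadratic cross-terms $(ab)(cd)$ and $(ad)(bc)$ cancel by (\ref{id22}). After pushing the remaining terms into left-normed form using (\ref{id21}), the difference becomes $-\tfrac{1}{4}\bigl[3(\alpha_{bcda}-\alpha_{cdba})+\alpha_{cdab}-\alpha_{bcad}\bigr]$. Vanishing of this quantity is a linear relation among left-normed monomials deduced from (\ref{id23}): expand $(cd)(ab)$ and $(bc)(ad)$ in left-normed form via (\ref{id23}), equate the two expansions (they represent the same element by (\ref{id22})), and combine with the analogous identity obtained from $(ab)(cd)=(ac)(bd)$. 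Identity (\ref{newident44}) is then a short consequence: expanding both sides in left-normed form, the difference reduces to (\ref{newident43}) plus the single relation $3(\alpha_{abcd}-\alpha_{bcad})=\alpha_{abdc}-\alpha_{bcda}$, itself obtained by equating the (\ref{id23})-expansions of $(ab)(cd)$ and $(bc)(ad)$.

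Identity (\ref{newident431}) is the most delicate. Two applications of (\ref{newident43}) (once with third slot $(cd)\cdot e$, and on the other side with third slot $c\cdot(de)$ followed by a second use of (\ref{newident43})) reduce the problem to the degree-$5$ equation $f(a;b(cd),e)=f(a;(bc)d,e)$, i.e., that the second slot of $f(a;{-},e)$ does not distinguish the two parenthesizations of a length-$3$ product. The main obstacle is that (\ref{newident43}) together with the symmetry (\ref{newident40}) alone yields only a tautological loop $f(a;b,(cd)e)\leftrightarrow f(a;b(cd),e)\leftrightarrow f(a;b,(cd)e)$, so the proof cannot be purely formal. I would open the definition of $f$ once more and, in particular, expand the cross-term $(ad)\bigl(b(cd)-(bc)d\bigr)$ in left-normed form using (\ref{id23}); collecting all resulting degree-$5$ left-normed monomials and applying (\ref{id22}), (\ref{id23}) together with the degree-$4$ relation already established in the proof of (\ref{newident43}) (now instantiated in degree $5$ with appropriate letter substitutions) produces the required cancellation.
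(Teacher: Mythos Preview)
Your approach is correct, and for (\ref{newident41}) it coincides with the paper's. For (\ref{newident43})--(\ref{newident431}) you take a somewhat different route. The paper works in the free commutative algebra $G(X)$ satisfying only (\ref{id22}), sets $\psi(a,b,c,d)$ equal to the left-hand side minus the right-hand side of (\ref{id23}), and for each identity exhibits an explicit linear combination of $\psi$-substitutions equal to the desired difference; since $\psi=0$ in $J(X)$, this suffices. No structural reasoning is given --- the coefficients are simply listed, most strikingly for (\ref{newident431}), where the difference is presented as a combination of twenty-seven $\psi$-terms.

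You instead push everything to left-normed monomials and isolate the single degree-$4$ relation $3(\alpha_{abcd}-\alpha_{bcad})=\alpha_{abdc}-\alpha_{bcda}$, obtained by equating the (\ref{id23})-expansions of $(ab)(cd)$ and $(bc)(ad)$; this relation (and its variable permutations) does indeed dispose of both (\ref{newident43}) and (\ref{newident44}), so your treatment of these two is cleaner than the paper's --- in fact one instance of it already gives (\ref{newident43}), so the second relation you mention from $(ab)(cd)=(ac)(bd)$ is not needed there. Your reduction of (\ref{newident431}) to $f(a;b(cd),e)=f(a;(bc)d,e)$ via two applications of (\ref{newident43}) is a genuine simplification the paper does not exploit. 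The residual degree-$5$ computation, however, is only sketched; note that the cross-term coming from the third summand of $f$ is $(ae)\bigl(b(cd)-(bc)d\bigr)$, not $(ad)(\cdots)$ as you write. Completing that step still requires tracking several degree-$5$ left-normed monomials and applying (\ref{id23}) with composite arguments, so the saving over the paper's brute-force list is real but modest.
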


\begin{proof} The identity $(\ref{newident41})$ follows from commutative identity and straightforward calculations.
Let $G(X)$ be the free commutative algebra with identity $(\ref{id22})$ generated by $X.$ Define $$\psi(a,b,c,d)=-(ab)(c d)-2((ab)c)d + (( a b)d)c +(( ac)b)d +(( bc)a)d,$$
for $a,b,c,d\in G(X).$ Since $\psi(a,b,c,d)=0$ in $J(X),$ it is enough to show that the identities $(\ref{newident43})-(\ref{newident431})$ are linear combinations of polynomials of the form $\psi(x,y,z,t)w$ in $G(X),$ where $x,y,z,t,w\in G(X).$ 

Then the identity $(\ref{newident43})$ follows from $$f(a;b,cd)-f(a;bc,d)=\frac{1}{4}\psi(c,b,d,a)-\frac{1}{4}\psi(c,d,b,a).$$

The identity $(\ref{newident44})$ follows from

$$f(a;b,c)d-\frac{1}{2}f(a;b,cd)-\frac{1}{2}f(d;a,bc)=$$
$$\frac{3}{32} \psi(b,a,c,d)-\frac{5}{32} \psi(b,a,d,c)+\frac{5}{32} \psi(c,a,b,d)+$$$$\frac{1}{32} \psi(c,a,d,b)-\frac{1}{16} \psi(c,b,a,d)-\frac{7}{32} \psi(d,a,b,c)-$$$$\frac{3}{32} \psi(d,a,c,b)-\frac{3}{16} \psi(d,b,a,c)+\frac{3}{16} \psi(d,c,b,a).$$

For the identity $(\ref{newident431})$ we have 

$$f(a;b,(cd)e)-f(a;b,c(de))=$$

%$$-\frac{1}{16}\psi (a,d,e,c) b-\frac{3}{32} \psi (b,e,c,d)a-$$ $$\frac{3}{16} \psi (c,d,b,e) a-\frac{3}{8}  \psi (c,d,e,b) a-$$ $$\frac{3}{32} \psi (c,e,b,d) a-\frac{3}{32} \psi (c,e,d,b) a-$$ $$\frac{1}{32} \psi (a,e,d,c) b-\frac{3}{32} \psi (d,e,a,c) b-$$ $$\frac{3}{32} \psi (d,e,c,a) b+\frac{1}{32} \psi (a,e,c,d) b-$$ $$\frac{1}{12} \psi (b,c,e,a d)+\frac{3}{16} \psi (b,d,e,c) a+$$ $$\frac{1}{12} \psi (b,e,d,a c)+\frac{3}{32} \psi (b,e,d,c) a+$$ $$\frac{1}{16} \psi (c,d,a,e) b+\frac{1}{16} \psi (c,d,a e,b)-$$ $$\frac{1}{16} \psi (c,d,b,a e)-\frac{3}{16} \psi (c,d,b e,a)+$$ $$\frac{1}{8}  \psi (c,d,e,a) b+\frac{1}{12} \psi (c,d,e,a b)+$$ $$\frac{1}{32} \psi (c,e,a,d) b+\frac{1}{12} \psi (c,e,b,a d)+$$ $$\frac{1}{32} \psi (c,e,d,a) b-\frac{1}{12} \psi (c,e,d,a b)-$$ $$\frac{1}{16} \psi (d,e,a c,b)-\frac{1}{48} \psi (d,e,b,a c)+$$ $$\frac{9}{32} \psi (d,e,b,c) a+\frac{3}{16} \psi (d,e,b c,a)+\frac{9}{32} \psi (d,e,c,b) a$$

$$-\frac{1}{16}\psi (a,d,e,c) b-\frac{3}{32} \psi (b,e,c,d)a- \frac{3}{16} \psi (c,d,b,e) a-$$ $$\frac{3}{8}  \psi (c,d,e,b) a-\frac{3}{32} \psi (c,e,b,d) a-\frac{3}{32} \psi (c,e,d,b) a-$$ $$\frac{1}{32} \psi (a,e,d,c) b-\frac{3}{32} \psi (d,e,a,c) b-\frac{3}{32} \psi (d,e,c,a) b+$$ $$\frac{1}{32} \psi (a,e,c,d) b-\frac{1}{12} \psi (b,c,e,a d)+\frac{3}{16} \psi (b,d,e,c) a+$$ $$\frac{1}{12} \psi (b,e,d,a c)+\frac{3}{32} \psi (b,e,d,c) a+\frac{1}{16} \psi (c,d,a,e) b+$$ $$\frac{1}{16} \psi (c,d,a e,b)-\frac{1}{16} \psi (c,d,b,a e)-\frac{3}{16} \psi (c,d,b e,a)+$$ $$\frac{1}{8}  \psi (c,d,e,a) b+\frac{1}{12} \psi (c,d,e,a b)+\frac{1}{32} \psi (c,e,a,d) b+$$ $$\frac{1}{12} \psi (c,e,b,a d)+\frac{1}{32} \psi (c,e,d,a) b-\frac{1}{12} \psi (c,e,d,a b)-$$ $$\frac{1}{16} \psi (d,e,a c,b)-\frac{1}{48} \psi (d,e,b,a c)+\frac{9}{32} \psi (d,e,b,c) a+$$ $$\frac{3}{16} \psi (d,e,b c,a)+\frac{9}{32} \psi (d,e,c,b) a.$$

\end{proof}

Let us define the set $B_1(X)=X,$ $B_2(X)=\{x_{i_1}x_{i_2}| {i_1}\leq {i_{2}}, \text{ and } x_{i_1},x_{i_2}\in X\},$ and for $n\geq 3$

$$B_n(X)=\{f(x_{i_1};x_{i_2},x_{i_3}\cdots x_{i_{n}})|  {i_2}\leq\cdots \leq {i_{n}}, \text{ and } x_{i_1},x_{i_2},\cdots ,x_{i_{n}}\in X\}.$$

\begin{lemma}\label{main lemma} Every  left-normed element $w\in J(X)$ of degree $n\geq3,$ can be expressed as a linear combination of elements in $B_n(X).$
\end{lemma}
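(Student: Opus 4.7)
The plan is induction on $n$. For the base case $n = 3$, every left-normed element is a linear combination of monomials $(x_{i_1} x_{i_2}) x_{i_3}$, and $(\ref{newident41})$ expands each such monomial as
$$f(x_{i_1}; x_{i_2}, x_{i_3}) + f(x_{i_2}; x_{i_1}, x_{i_3}) + 2\, f(x_{i_3}; x_{i_1}, x_{i_2}).$$
Applying $(\ref{newident40})$ to sort the last two slots of each summand then yields a linear combination of elements of $B_3(X)$.

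For the inductive step, write $w = u\cdot x_{i_n}$, where $u$ is left-normed of degree $n-1$. By the inductive hypothesis, $u$ is a linear combination of elements of $B_{n-1}(X)$, so it suffices to rewrite each product
$$f(x_{j_1}; x_{j_2}, x_{j_3}\cdots x_{j_{n-1}})\cdot x_{i_n}$$
as a linear combination of elements of $B_n(X)$. Applying $(\ref{newident44})$ with $a = x_{j_1}$, $b = x_{j_2}$, $c = x_{j_3}\cdots x_{j_{n-1}}$, and $d = x_{i_n}$ turns this product into
$$\tfrac{1}{2}\, f(x_{j_1}; x_{j_2}, (x_{j_3}\cdots x_{j_{n-1}})\, x_{i_n}) + \tfrac{1}{2}\, f(x_{i_n}; x_{j_1}, x_{j_2}(x_{j_3}\cdots x_{j_{n-1}})).$$

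To bring each summand into the required form, I would first apply $(\ref{newident431})$ iteratively to rebracket the third slot into a left-normed product; a standard diamond-lemma argument shows that this single rule suffices to equate any two binary parenthesizations of a fixed sequence of factors sitting in the last slot of $f$. Next, I would sort the ``lower'' variables by means of the derived transposition
$$f(a; b, c\cdot r) = f(a; bc, r) = f(a; cb, r) = f(a; c, b\cdot r),$$
whose outer equalities apply $(\ref{newident43})$ in both directions and whose middle equality uses commutativity. Combined with $(\ref{newident40})$, these adjacent transpositions realize the full symmetric group action on the lower variables of $f$, so the indices can be rearranged into non-decreasing order.

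The main technical point will be verifying, from $(\ref{newident40})$, $(\ref{newident43})$, and $(\ref{newident431})$ alone, that the value of $f(x_{i_1}; x_{i_2}, x_{i_3}\cdots x_{i_m})$ depends only on $x_{i_1}$ and on the multiset $\{x_{i_2},\ldots,x_{i_m}\}$---independent both of the parenthesization of the third slot and of which variable is singled out into the second slot. Once this is confirmed, both summands above rewrite as single elements of $B_n(X)$, and the induction closes.
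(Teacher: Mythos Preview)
Your proposal is correct and follows essentially the same approach as the paper: induction on $n$, base case via (\ref{newident41}), inductive step via (\ref{newident44}), and normalization of the resulting $f$-terms using (\ref{newident43}) and (\ref{newident431}). The paper records that last step in one sentence, whereas you spell out the derived transposition $f(a;b,c r)=f(a;c,b r)$ and the rebracketing explicitly; the underlying argument is the same.
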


\begin{proof}
We prove the statement by induction on degree $n.$
The basis of induction is $n = 3$ follows from identity $(\ref{newident41}).$

Assume that the statement of Lemma is true for elements whose degree are less than $n>3.$  Let $u\in J(X),$ and $u=wx,$ where $x\in X$ and $w$ is the element of  $J(X)$ whose degree less than $n.$ Therefore, by induction on $n$ we may assume that $w=f(x_{i_1};x_{i_2},x_{i_3}\cdots x_{i_{k}})\in B_k(X),$  where $k<n.$

Then we consider  $$u=wx=f(x_{i_1};x_{i_2},x_{i_3}\cdots x_{i_{k}})x=$$
(by identity $(\ref{newident44})$)
$$\frac{1}{2}f(x_{i_1};x_{i_2},x_{i_3}\cdots x_{i_{k}}x)+\frac{1}{2}f(x;x_{i_1},x_{i_2}x_{i_3}\cdots x_{i_{k}}),$$
and by the identities $(\ref{newident43})$ and $(\ref{newident431})$ each of the above elements in $B_n(X).$

\end{proof}

Let $J_n(X)$
be the $n$-th homogeneous part of $J(X).$

\begin{corollary}
The set $B_n(X)$ forms a linear basis of $J_n(X).$
\end{corollary}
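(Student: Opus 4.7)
The plan is to establish spanning and linear independence separately. For spanning, the cases $n=1,2$ are immediate from commutativity, and for $n\ge 3$ Lemma \ref{main lemma} (together with the preceding reduction to left-normed elements) already shows that $B_n(X)$ spans $J_n(X)$. The content of the corollary is therefore linear independence.

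For independence, the approach is to push $B_n(X)$ into an actual perm algebra, where its image will be obviously independent. By Theorem \ref{Identities in perm plus}, $P(X)^{(+)}$ satisfies the defining identities of $J(X)$, so the assignment $x_i\mapsto x_i$ extends to an algebra homomorphism $\phi\colon J(X)\to P(X)^{(+)}$. The main calculation is to verify that for any three elements $A,B,C$ of the associative perm algebra $P(X)$,
\[
\phi(f(A;B,C)) = -\tfrac{1}{4}\bigl(\{\{A,B\},C\} - 3\{\{B,C\},A\} + \{\{A,C\},B\}\bigr) = ABC.
\]
This collapses because inside $P(X)$ the perm identity makes any two words of length $3$ with the same leading letter equal; after expanding the three anticommutators and grouping by leading letter, the $B$-leading and $C$-leading contributions cancel, leaving $-\tfrac{1}{4}(-4\,ABC)=ABC$.

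Applying this identity with $A=x_{i_1}$, $B=x_{i_2}$, and $C=\phi(x_{i_3}\cdots x_{i_n})$ (the iterated left-normed anticommutator, which expands to a sum of $2^{n-3}$ permutations of $x_{i_3},\dots,x_{i_n}$), and using the perm identity again, which makes all these permutations coincide once preceded by $x_{i_1}x_{i_2}$, I would obtain
\[
\phi\bigl(f(x_{i_1};x_{i_2},x_{i_3}\cdots x_{i_n})\bigr) = 2^{n-3}\, x_{i_1}x_{i_2}x_{i_3}\cdots x_{i_n}.
\]
Since $i_2\le\cdots\le i_n$, these are pairwise distinct canonical basis monomials of $P_n(X)$, so $\phi$ sends distinct elements of $B_n(X)$ to linearly independent nonzero elements of $P(X)$, which proves the required independence.

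The main calculation to nail down is the identity $\phi(f(A;B,C))=ABC$; once that is in hand the rest is bookkeeping, and in characteristic zero the scalar $2^{n-3}$ is harmless. As a bonus, this argument also delivers Theorem \ref{free J(X) special}: $\phi$ is injective on each homogeneous component, so $J(X)$ embeds into $P(X)^{(+)}$ and is $p^+$-special.
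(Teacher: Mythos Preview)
Your argument is correct. The identity $-\tfrac14\bigl(\{\{A,B\},C\}-3\{\{B,C\},A\}+\{\{A,C\},B\}\bigr)=ABC$ in $P(X)$ checks out exactly as you describe, and since $\phi$ is an algebra homomorphism, $\phi\bigl(f(x_{i_1};x_{i_2},x_{i_3}\cdots x_{i_n})\bigr)=x_{i_1}x_{i_2}\,\phi(x_{i_3}\cdots x_{i_n})$. The only imprecision is the phrase ``a sum of $2^{n-3}$ permutations'': when the $x_{i_j}$ are not all distinct the words can collide, so what you really use is that the coefficients in the expanded anticommutator sum to $2^{n-3}$, which is enough once $x_{i_1}$ leads every term.

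The paper's route is different in flavor. It does not compute $\phi$ on $B_n(X)$ at all; instead it appeals to Theorem~\ref{Jordan elements}, which says $SJ_n(X)=P_n(X)$ for $n\ge 3$, and then runs a pure dimension count: $B_n(X)$ spans $J_n(X)$ by Lemma~\ref{main lemma}, while $|B_n(X)|=\dim P_n(X)\le \dim J_n(X)$ because $\phi$ surjects onto $P_n(X)$. Your approach trades that counting argument for an explicit formula, which has two payoffs: it bypasses Theorem~\ref{Jordan elements} entirely (indeed it re-derives it, since the image of $\phi$ hits a scalar multiple of every basic perm monomial), and it simultaneously proves Theorem~\ref{free J(X) special}, which the paper establishes separately. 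The paper's argument is shorter once Theorem~\ref{Jordan elements} is in hand; yours is more self-contained and yields an explicit section of $\phi$.
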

\begin{Proof}
Follows from Lemma $\ref{main lemma}$ and from Theorem $\ref{Jordan elements}.$
\end{Proof}

\begin{proof}[Proof of Theorem $\ref{free J(X) special}$]
It is sufficient to show that algebras $J(X)$ and $SJ(X)$ are isomorphic. Let $\phi$ be the natural homomorphism from $J(X)$ to $SJ(X)$ sending $x$ to $x,$ for $x\in X.$ 
By Lemma $\ref{main lemma}$ the vector space $J_n(X)$ is spanned by the set $B_n(X).$ We note that the number of elements in $B_n(X)$ of degree $n$ is equal to the number of elements of perm algebra $P(X)$ of degree $n.$  Suppose  $\phi$ is not injective then there exists a multi-linear polynomial of degree $n$ in K$er\phi$.  Then we have a linear combination of elements  $f(x_{i_1};x_{i_2},\ldots,x_{i_n})$ which is zero in $SJ(X).$ It contradicts to Theorem $\ref{Jordan elements}.$ Therefore, K$er\phi =(0).$
\end{proof}

\newpage

\begin{center} ACKNOWLEDGMENTS   \end{center}
This research was funded by the Science Committee of the Ministry of Education and Science of the Republic of Kazakhstan (Grant No. AP14870282) and by  the grant ``Táýelsizdik urpaqtary'' of the Ministry of Information and Social Development of the Republic of Kazakhstan.
The authors are grateful to Professor A. Dzhumadil'daev for his stimulating questions about the identities of perm algebras under commutators. The authors are also grateful to Professor P. Kolesnikov for his comments and advice, thanks to which the article has acquired such a full-fledged look.

\end{document}